\numberwithin{equation}{section}
\newtheorem{theorem}{Theorem}[section]
\newtheorem{lemma}[theorem]{Lemma}
\theoremstyle{definition}
\newtheorem{definition}[theorem]{Definition}
\theoremstyle{remark}
\newtheorem{acknowledgement}{Acknowledgement}
\newtheorem{duplicate}{Theorem}
\newcommand{\im}{\operatorname{im}}
\newcommand{\id}{\operatorname{id}}
\newcommand{\Ext}{\operatorname{Ext}}
\newcommand{\Hom}{\operatorname{Hom}}
\newcommand{\cl}{\operatorname{cl}}
\newcommand{\symt}{\text{Sym}^2}
\newcommand{\fm}{\frak{m}}
\newcommand{\RR}{{\mathbb R}}
\begin{document}
\author{Rajsekhar Bhattacharyya}

\title
{Existence of almost Cohen-Macaulay algebras implies the existence of big Cohen-Macaulay algebras}

\address{Dinabandhu Andrews College, Kolkata 700084, India}
\email{rbhattacharyya@gmail.com}
\subjclass[2010]{13C14}

\keywords{Almost Cohen-Macaulay, Big Cohen-Macaulay, Closure operation.}

\begin{abstract}
In \cite{AB}, the dagger closure is extended over finitely generated modules over Noetherian local domain $(R,\fm)$ and it is proved to be a Dietz closure. In this short note we show that it also satisfies the `Algebra axiom' of \cite{R.G} and this leads to the following result of this paper: For a complete Noetherian local domain, if it is contained in an almost Cohen-Macaulay domain, then there exists a balanced big Cohen-Macaulay algebra over it. 
\end{abstract}

\maketitle

\section{introduction}

A big Cohen-Macaulay algebra over a local ring $(R,\fm)$ is an $R$-algebra $B$ such that some system of parameters of $R$ is a regular sequence on $B$. It is balanced if every system of parameters of $R$ is a regular sequence on $B$. Big Cohen-Macaulay algebras exist in equal characteristic \cite{Ho1}, \cite{HH5} and also in mixed characteristic when $\dim R\leq 3$ \cite{Ho3}. The existence of big Cohen-Macaulay algebras is extremely important as for example, it gives new proofs for many of the homological conjectures, such as the direct summand conjecture, monomial conjecture, and vanishing conjecture for maps of Tor.

In equal characteristic, the tight closure operation has been used to present proofs of the existence of balanced big Cohen-Macaulay modules and algebras. In \cite{D}, a list of seven axioms for a closure operation is defined for finitely generated modules over a complete local domain $R$. Any closure operation which satisfies these seven axioms is called Dietz closure. A Dietz closure is powerful enough to produce big Cohen-Macaulay modules. It is to be noted that, in a recent work of Dietz \cite{D1}, axioms are extended beyond finitely generated modules. 

In this context, it is worth it to recall the definition of an almost Cohen-Macaulay algebra (\cite{RSS}): Let $A$ be an algebra over a Noetherian local domain $(R,\fm)$, equipped  with a normalized value map $v : A\to \RR\cup\{\infty\}$. $A$ is called almost Cohen-Macaulay if each element of $((x_{1},\ldots, x_{i-1})A:_{A}x_{i})/(x_{1},\ldots,x_{i-1})A$ is annihilated by elements of sufficiently small order with respect to $v$ for all system of parameters $x_1,\ldots, x_d$ of $A$.

In \cite{AB}, using the algebra with normalized value map, we extend the definition of dagger closure to the finitely generated modules over local domain and we showed that dagger closure is a Dietz closure (\cite{AB}, Proposition 4.5) when $A$ is almost Cohen-Macaulay. As a consequence we showed that for a complete Noetherian local domain, if it is contained in an almost Cohen-Macaulay domain, then there exists a balanced big Cohen-Macaulay module over it (\cite{AB}, Corollary 4.6). 

Recently, in \cite{R.G}, an additional axiom is introduced, which is known as the `Algebra Axiom', and it is shown there that many closure operations satisfy this axiom. It is also proved that a local domain $R$ has a Dietz closure that satisfies the Algebra Axiom if and only if $R$ has a big Cohen-Macaulay algebra. In this short note we show that dagger closure as in \cite{AB} also satisfies the `Algebra axiom' of \cite{R.G}. This leads us to the following result of this paper.

\begin{duplicate}[Theorem 3.2]
For a complete Noetherian local domain, if it is contained in an almost Cohen-Macaulay domain, then there exists a balanced big Cohen-Macaulay algebra over it.
\end{duplicate}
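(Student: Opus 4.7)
The strategy is to combine three ingredients. First, by \cite[Proposition~4.5]{AB}, the dagger closure $N \mapsto N^{\dagger}_M$ on finitely generated $R$-modules, defined via the normalized value map $v$ on the almost Cohen-Macaulay domain $A \supseteq R$, is already a Dietz closure. Second, the main theorem of \cite{R.G} states that a complete local domain carrying a Dietz closure which satisfies the Algebra Axiom admits a big Cohen-Macaulay algebra. Third, passing from a big to a balanced big Cohen-Macaulay algebra is standard (going back to Hochster). So the only genuinely new work required by Theorem~3.2 is the verification of the Algebra Axiom for this dagger closure.

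Accordingly, I would unwind the Algebra Axiom into a concrete demand on the triple $(R, A, v)$: given a phantom short exact sequence $0 \to R \xrightarrow{\alpha} M \to C \to 0$ in which $\alpha(1)$ lies in the dagger closure of $0$ in the relevant modification quotient, and given an algebra modification of $M$ adjoining a new generator that realizes one of its witnessing relations, the image of $\alpha(1)$ must remain in the dagger closure of $0$ in the modified module. Since dagger membership is witnessed by a family $\{a_{\varepsilon}\} \subset A$ with $v(a_{\varepsilon}) < \varepsilon$ annihilating the relevant colon, the key step is to propagate these witnesses through the modification. This is exactly where the multiplicative structure of $A$ should enter: given witnessing families $\{a_{\varepsilon}\}$ and $\{b_{\delta}\}$ for two successive closure relations, the products $a_{\varepsilon}\, b_{\delta}$ still satisfy $v(a_{\varepsilon}\, b_{\delta}) \to 0$ and can be used as witnesses in the enlarged module.

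The main obstacle I expect is the coherence of these propagated witnesses with the defining relations of the algebra modification; this mirrors the verification of the Algebra Axiom for tight closure in \cite{R.G}, but in the dagger setting one must manage a family of continuously varying value estimates rather than a single test element, which should reduce to careful bookkeeping on $v$. Once the Algebra Axiom is in hand, invoking \cite{R.G} yields a big Cohen-Macaulay $R$-algebra $B$; forming a direct limit of partial algebra modifications forcing each system of parameters of $R$ in turn to act as a regular sequence then produces the desired balanced big Cohen-Macaulay algebra over $R$.
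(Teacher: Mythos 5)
Your overall skeleton matches the paper's: take the dagger closure from \cite{AB} (already a Dietz closure), verify the Algebra Axiom, and invoke Theorem 3.3 of \cite{R.G} (which, note, already produces a \emph{balanced} big Cohen--Macaulay algebra, so your final direct-limit step is redundant). The genuine gap is in the middle step: you have misidentified what the Algebra Axiom actually asserts. It is not a statement about propagating dagger witnesses through algebra modifications; it says that if $\alpha\colon R\to M$ is $\cl$-phantom then the map $\alpha'\colon R\to \symt(M)$ with $\alpha'(1)=\alpha(1)\otimes\alpha(1)$ is again $\cl$-phantom. Your ``concrete demand on $(R,A,v)$'' is a surrogate extracted from how \cite{R.G} \emph{uses} the axiom inside the proof of her theorem; verifying that surrogate does not entitle you to cite Theorem 3.3 of \cite{R.G}, and you never state, let alone prove, phantomness of $R\to\symt(M)$.

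Moreover, the verification you would actually need is not mere ``bookkeeping on $v$'' over modification relations; it requires a translation step that your sketch skips. Dag-phantomness of $\alpha$ is a condition on a cocycle in $\Hom_R(P_1,R)$ lying in the dagger closure of the coboundaries, and the paper first proves (Lemma 3.1, using Lemma 5.6 of \cite{HH3} together with torsion-freeness of $A$ over $R$ so that $A\otimes_R P_\bullet$ stays a resolution) that this is equivalent to: for every $\epsilon>0$ there exist $a\in A$ with $v(a)<\epsilon$ and an $A$-linear $\gamma_\epsilon\colon A\otimes_R M\to A$ with $\gamma_\epsilon\circ(\id_A\otimes\alpha)=a\cdot\id_A$. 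With that characterization the Algebra Axiom follows by an explicit construction you do not supply: since $\symt$ commutes with base change, define $\gamma'_\epsilon$ on $A\otimes_R\symt(M)$ by $a\otimes(m\otimes m')\mapsto a\,\gamma_{\epsilon/2}(m)\gamma_{\epsilon/2}(m')$, which composes with $\id_A\otimes\alpha'$ to give multiplication by $a^2$, and $v(a^2)<\epsilon$. Your instinct that the multiplicative structure of $A$ and the additivity of $v$ on products is the engine (your $a_\varepsilon b_\delta$ remark) is exactly the right one, but as written the proposal neither identifies the correct target $\symt(M)$ nor establishes the Hom-theoretic characterization that makes the squaring argument legitimate, so the key step of the theorem is missing.
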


In \cite{Ho3}, Hochster proves the existence of weakly functorial big Cohen-Macaulay algebras from the existence of almost Cohen-Macaulay algebras when $\dim R \leq 3$. Our result provides a partial extension that is not tied to the Krull dimension of the ring. 

\section{preliminary results}

Hochster and Huneke defined dagger closure for ideals in a local domain via elements of small order. See \cite{HH}. We previously extended that notion as follows:

\begin{definition} (\cite{AB})\label{def:almost closure of an ideal}
Let $A$ be a local algebra with a normalized valuation $v:A\rightarrow \RR\cup\{\infty\}$ and $M$ be an $A$-module. Consider a submodule $N\subset M$. Given $x\in M$, we say $x\in N^{v}_M$ if for every $\epsilon> 0$, there exists $a\in A$ such that $v(a)< \epsilon$ and $ax\in N$.
\end{definition}

Now we extend the definition of dagger closure to finitely generated modules (\cite{AB}) and we call this new closure as dagger  closure to local domains contained in such an $A$ as above.

\begin{definition}\label{def}
Let $(R,m)$ be a Noetherian local domain and let $A$ be a local domain containing $R$ with a normalized valuation $v:A\rightarrow \RR\cup\{\infty\}$. For any finitely generated $R$-module $M$ and for its submodule $N$, given $x\in M$ we say that $x\in N_{M}^{\bold{v}}$ if $1\otimes x \in \im(A\otimes N\to A\otimes M)_{A\otimes M}^{v}$. We call $N_{M}^{\bold{v}}$ the dagger closure of $N$ inside $M$.
\end{definition}

We recall the definition of Dietz closure.

\begin{definition}
For a Noetherian local domain, if a closure operation satisfies all the axioms of Axiom 1.1 of \cite{D}, then it is defined as Dietz closure.
\end{definition}

We restate the results of Proposition 4.5 and Corollary 4.6 of \cite{AB} in a single theorem.

\begin{theorem}
Let $R$ be a complete Noetherian local domain and let $A$ be a local domain containing $R$, equipped with a normalized valuation $v:A\rightarrow \RR\cup\{\infty\}$. Consider the dagger closure operation as defined in Definition 2.2. If $A$ is an almost Cohen-Macaulay algebra, then dagger closure becomes a Dietz closure. Moreover in this case, there exists a balanced big Cohen-Macaulay module over $R$.
\end{theorem}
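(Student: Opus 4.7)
The strategy is to combine the preceding theorem with the main result of \cite{R.G}: a complete Noetherian local domain $(R,\fm)$ admits a big Cohen-Macaulay algebra precisely when it admits a Dietz closure that additionally satisfies the Algebra Axiom. Since the preceding theorem (from \cite{AB}) already establishes that the dagger closure $N \mapsto N_M^{\bold{v}}$ of Definition 2.2 is a Dietz closure whenever $A$ is almost Cohen-Macaulay, the entire task reduces to verifying the Algebra Axiom for this particular closure. Once that is in hand, the cited characterization delivers a big Cohen-Macaulay $R$-algebra, which one then promotes to a balanced one by Hochster's standard modification-and-direct-limit process.

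First I would recall the precise form of the Algebra Axiom of \cite{R.G}, which concerns a distinguished short exact sequence $0 \to R \xrightarrow{\alpha} M \to L \to 0$ (arising from a specified generator of a free summand in a presentation of $M$) together with a symmetric-algebra-type module $M'$ built from $M$; the axiom asserts that in this enlarged module the image of $1 \in R$ still fails to lie in the closure of zero. To verify it for the dagger closure I would argue by contradiction. Assume $1 \otimes 1 \in (0)_{A \otimes M'}^{v}$. Unwinding Definition 2.2, this produces elements $a \in A$ of arbitrarily small valuation $v(a) < \epsilon$ that annihilate $1 \otimes 1$ in $A \otimes M'$. Tracking how the relations defining $M'$ behave after tensoring with $A$, this would force $a$ to lie, up to terms killed by elements of arbitrarily small $v$-value, inside a certain parameter-ideal colon in $A$. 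Applied to a suitable system of parameters of $A$, the almost Cohen-Macaulay hypothesis then yields a contradiction, exactly as in the Dietz-axiom verifications of \cite{AB}.

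Once the Algebra Axiom is confirmed, the theorem of \cite{R.G} furnishes a big Cohen-Macaulay $R$-algebra $B$. To promote $B$ to a balanced big Cohen-Macaulay algebra, I would invoke the modification-and-direct-limit construction of Hochster (see \cite{HH5}, \cite{Ho1}): successively performing the standard modifications of $B$ along every system of parameters of $R$ and passing to a direct limit produces an $R$-algebra on which every system of parameters is a regular sequence, i.e.\ a balanced big Cohen-Macaulay $R$-algebra, while nonvanishing of $1$ is preserved because $R$ is complete local.

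The chief obstacle is the verification of the Algebra Axiom itself. The axiom is formulated in terms of an auxiliary module built via symmetric-algebra relations, and one must carefully control how tensoring with $A$ transports those relations and how annihilators of small $v$-value interact with the distinguished element $1 \in R$. This is precisely the point at which the almost Cohen-Macaulay hypothesis on $A$, and not merely the existence of a normalized valuation, becomes indispensable; the bulk of the work in the proof will consist of making this valuation-theoretic bookkeeping rigorous and parallel in spirit to the arguments already used to establish the Dietz axioms in \cite{AB}.
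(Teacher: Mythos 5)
The statement you are addressing is the paper's Theorem 2.4, which the paper itself establishes purely by citation: it is a restatement of Proposition 4.5 and Corollary 4.6 of \cite{AB}, and its ``moreover'' clause asks only for a balanced big Cohen--Macaulay \emph{module}, which follows at once from the first clause together with Dietz's theorem in \cite{D} that any Dietz closure on a complete Noetherian local domain yields a balanced big Cohen--Macaulay module. Your detour through the Algebra Axiom and \cite{R.G} is therefore unnecessary for this statement, and, as written, it contains a genuine gap: the verification of the Algebra Axiom for dagger closure is precisely the hard content of the paper's Section 3 (its main Theorem 3.2), and you leave it as a sketch. Moreover the sketch aims at the wrong target. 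The Algebra Axiom is not the assertion that ``the image of $1$ fails to lie in the closure of zero'' in some enlarged module; it asserts that if $\alpha\colon R\to M$ is $\cl$-phantom, then $\alpha'\colon R\to \symt(M)$ with $1\mapsto\alpha(1)\otimes\alpha(1)$ is again $\cl$-phantom, and phantomness is a \emph{positive} membership condition: a cocycle representing the extension class must lie in the closure of the coboundaries inside $\Hom_R(P_1,R)$. Arguing by contradiction from ``$1\otimes 1\in(0)^{v}_{A\otimes M'}$'' does not address this, and no colon-ideal contradiction with the almost Cohen--Macaulay property is ever actually derived in your outline.

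If you do want the algebra-level conclusion (the paper's Theorem 3.2), the missing idea is the paper's Lemma 3.1: by Lemma 5.6 of \cite{HH3}, $\alpha$ is dag-phantom if and only if for every $\epsilon>0$ there exist $a\in A$ with $v(a)<\epsilon$ and an $A$-linear map $\id_A\otimes_R\gamma_{\epsilon}\colon A\otimes_R M\to A$ satisfying $(\id_A\otimes_R\gamma_{\epsilon})\circ(\id_A\otimes_R\alpha)=a(\id_A\otimes_R\id_R)$. With this in hand the Algebra Axiom is verified \emph{constructively}, not by contradiction: given $\gamma_{\epsilon/2}$ with multiplier $a$, one defines $(\id_A\otimes_R\gamma'_{\epsilon})(c\otimes(m\otimes m'))=c\,\gamma_{\epsilon/2}(m)\gamma_{\epsilon/2}(m')$ on $A\otimes_R\symt(M)$ and checks the new multiplier is $a^2$ with $v(a^2)<\epsilon$; the almost Cohen--Macaulay hypothesis enters only through the Dietz-closure property (i.e.\ through the present Theorem 2.4 via \cite{AB}), not through any parameter-colon computation at this step. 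Finally, your ``promotion to balanced'' step is redundant: Theorem 3.3 of \cite{R.G} already produces a balanced big Cohen--Macaulay algebra, and in any case a balanced big Cohen--Macaulay algebra is in particular a balanced big Cohen--Macaulay module, which is all the present statement requires.
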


As mentioned earlier, in \cite{D1}, axioms are extended beyond finitely generated modules. These new axioms should imply that dagger closure (as defined in Definition 2.2) is a Dietz closure even without assuming that the module is finitely generated. The new work also extends the definition of phantom extensions for non finitely generated modules.

We recall the definition of phanton extension for a closure operation (see \cite{HH3} and \cite{D}). Let $R$ be a ring with a closure operation $\cl$, $M$ a finitely generated $R$-module, and $\alpha:R\rightarrow M$ an injective map with cokernel $Q$. We have a short exact sequence 

\[\begin{CD}
0 @>>> R @>{\alpha}>> M @>>> Q @>>> {0.}
\end{CD}\]
Let $P_\bullet$ be a projective resolution (equivalently, free resolution since $R$ is local) for $Q$ over $R$. Then, this yields the following commutative diagram 
\[\begin{CD}
0 @>>> R @>{\alpha}>> M @>>> Q @>>> 0 \\
@AAA @AA{\phi}A @AAA @AA{\id}A @. \\
P_2 @>>> P_1 @>{d}>> P_0 @>>> Q @>>> 0. \\
\end{CD}\]
Let $\epsilon \in \Ext_R^1(Q,R)$ be the element corresponding to this short exact sequence via the Yoneda correspondence. We say that $\alpha$ is a $\cl$-phantom extension, if for above projective resolution $P_\bullet$ of $Q$, a cocycle representing $\epsilon$ in $\Hom_R(P_1,R)$ is in $\im (\Hom_R(P_0,R)\rightarrow \Hom_R(P_1,R))_{\Hom_R(P_1,R)}^{\cl}$. When closure operation is dagger closure as defined above, we call $\cl$-phantom extension as `dag-phantom' extension.

The Algebra Axiom of \cite{R.G} states that: For any closure operation $\cl$, if $R \overset{\alpha}\to M,$ is $\cl$-phantom, then the map $R \overset{\alpha'}\to \symt(M)$, sending $1 \mapsto \alpha(1) \otimes \alpha(1)$ is also $\cl$-phantom. The author proves (Theorem 3.3) that if a local domain $R$ has a Dietz closure $\cl$ that satisfies the Algebra axiom, then $R$ has a balanced big Cohen-Macaulay algebra.

\section{main result}

In this section we prove the main theorem. We begin with the following lemma.

\begin{lemma}
Let $R$ be a complete Noetherian local domain and let $A$ be an almost Cohen-Macaulay local domain containing $R$, possessing a normalized valuation map $v:A\rightarrow \RR\cup\{\infty\}$. For a finitely generated $R$-module $M$, an injection $\alpha:R\to M$ is dag-phantom if and only if for every $\epsilon >0$, there exist an element $a\in A$ with $v(a)<\epsilon$ and an $A$-module homomorphism $\id_A\otimes_R \gamma_{\epsilon}:A\otimes_R M \to A$, such that $(\id_A\otimes_R \gamma_{\epsilon}) \circ (\id_A\otimes_R \alpha)=a(\id_A\otimes_R \id_R)$.
\end{lemma}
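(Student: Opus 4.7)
First I will unpack the dag-phantom hypothesis concretely using the resolution in the excerpt. Let $\psi:P_0\to M$ be the lift of $P_0\to Q$ implicit in the commutative diagram (so $\psi\circ d=\alpha\circ\phi$), where $\phi:P_1\to R$ is the chosen cocycle representing the Ext class. Since $P_0$ and $P_1$ are finitely generated free, the canonical map $A\otimes_R\Hom_R(P_i,R)\to\Hom_A(A\otimes_R P_i,A)$ is an isomorphism, so the dag-phantom condition translates into: for every $\epsilon>0$ there exist $a\in A$ with $v(a)<\epsilon$ and an $A$-linear map $\tilde\psi:A\otimes_R P_0\to A$ such that
\[\tilde\psi\circ(\id_A\otimes_R d)=a\cdot(\id_A\otimes_R\phi).\]
The plan is to prove each direction of the lemma by translating such a $\tilde\psi$ into $\gamma_\epsilon$ and back.

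For the implication from approximate splitting to dag-phantom, given $\gamma_\epsilon$ with $(\id_A\otimes_R\gamma_\epsilon)\circ(\id_A\otimes_R\alpha)=a\cdot\id_A$, I set $\tilde\psi:=(\id_A\otimes_R\gamma_\epsilon)\circ(\id_A\otimes_R\psi)$. The identity $\psi\circ d=\alpha\circ\phi$ then yields
\[\tilde\psi\circ(\id_A\otimes_R d)=(\id_A\otimes_R\gamma_\epsilon)\circ(\id_A\otimes_R\alpha)\circ(\id_A\otimes_R\phi)=a\cdot(\id_A\otimes_R\phi),\]
which is exactly the reformulated dag-phantom condition.

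For the reverse direction, I will use the pushout description of $M$. Because $Q=\coker d$ and the square with corners $P_1,P_0,R,M$ commutes, a short direct check shows this square is a pushout, so $M\cong(P_0\oplus R)/K$ with $K$ generated by $(d(p),-\phi(p))$ for $p\in P_1$. Applying the right exact functor $A\otimes_R(-)$ identifies $A\otimes_R M$ with $(A\otimes_R P_0\oplus A)/K_A$, where $K_A$ is generated by the images of $(1\otimes d(p),-\phi(p))$. Given dag-phantom data $(a,\tilde\psi)$, define $\Gamma:A\otimes_R P_0\oplus A\to A$ by $(\xi,b)\mapsto\tilde\psi(\xi)+ab$. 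The relation $\tilde\psi\circ(\id_A\otimes_R d)=a\cdot(\id_A\otimes_R\phi)$ forces $\Gamma$ to vanish on the generators of $K_A$, so $\Gamma$ descends to an $A$-linear map $\id_A\otimes_R\gamma_\epsilon:A\otimes_R M\to A$, and restricting to the $A$-summand gives $(\id_A\otimes_R\gamma_\epsilon)\circ(\id_A\otimes_R\alpha)=a\cdot\id_A$.

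The only real obstacle is bookkeeping: one must juggle two distinct tensorings---the dag-phantom condition lives in $A\otimes_R\Hom_R(P_1,R)$, while the conclusion concerns $A$-linear maps out of $A\otimes_R M$---and check that the identification of $A\otimes_R M$ as a quotient of $A\otimes_R P_0\oplus A$ (only right exactness is available, not flatness of $A$ over $R$) is compatible with the constructions. I note that the almost Cohen--Macaulay hypothesis on $A$ is not actually invoked in this equivalence; it is inherited from the standing assumption of the section, ensuring via Theorem 2.4 that dagger closure is a Dietz closure, which is what makes dag-phantom the relevant notion in the sequel.
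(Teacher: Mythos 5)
Your proposal is correct, and its skeleton matches the paper's up to one point: both arguments reduce the lemma to the observation that, after base change to $A$ and using the definition of dagger closure, the dag-phantom condition for $\alpha$ is exactly the statement that for every $\epsilon>0$ there is $a\in A$ of order $<\epsilon$ with $a(1\otimes\phi)\in\im\bigl(A\otimes_R\Hom_R(P_0,R)\to A\otimes_R\Hom_R(P_1,R)\bigr)$, where $\phi$ is the Yoneda representative coming from the lifted resolution. Where you genuinely diverge is in how this membership is matched with the existence of $\gamma_\epsilon$: the paper cites Lemma 5.6 of \cite{HH3} twice, applied over $A$ to the sequence $0\to A\to A\otimes_R M\to A\otimes_R Q\to 0$ (which is why it first invokes torsion-freeness of $A$, via Lemma 3.9 of \cite{R.G}, to get injectivity of $\id_A\otimes_R\alpha$, and asserts that $A\otimes_R P_\bullet$ is a resolution of $A\otimes_R Q$), whereas you reprove the equivalence directly: the forward direction by composing $\id_A\otimes_R\gamma_\epsilon$ with a lift $\psi$ of $P_0\to Q$ and using $\psi\circ d=\alpha\circ\phi$, and the converse by identifying $M$ with the pushout $(P_0\oplus R)/K$ and descending $\Gamma(\xi,b)=\tilde\psi(\xi)+ab$ to $A\otimes_R M$. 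Your route buys self-containedness and slightly weaker requirements: it uses only right-exactness of $A\otimes_R(-)$, needs neither the injectivity of $\id_A\otimes_R\alpha$ nor the (in general unjustified without flatness) claim that $A\otimes_R P_\bullet$ is exact in higher degrees, and it makes explicit the identification $A\otimes_R\Hom_R(P_i,R)\cong\Hom_A(A\otimes_R P_i,A)$ for finitely generated free $P_i$ that the paper uses tacitly; the paper's route is shorter on the page only because your pushout construction is essentially the proof of the cited Lemma 5.6. Your closing remark that the almost Cohen--Macaulay hypothesis plays no role in this equivalence, and only enters later through Theorem 2.4, is also accurate.
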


\begin{proof}
Since $A$ is a torsion free $R$-module (\cite{R.G}, Lemma 3.9), $A\stackrel{\id_A\otimes_R \alpha}{\rightarrow} A\otimes_R M$ is injective. Observe the short exact sequence
\[\begin{CD}
0 @>>> R @>{\alpha}>> M @>>> Q @>>> 0 \\
\end{CD},\] and let $P_\bullet$ projective resolution of $Q$. This implies the following commutative diagram where $A\otimes_R P_\bullet$ is a projective resolution (equivalently, free resolution) of $A\otimes_R Q$ over $A$.

\[\begin{CD}
0 @>>> A @>{\id_A\otimes_R \alpha}>> A\otimes_R M @>>> A\otimes_R Q @>>> 0 \\
@AAA @AA{\id_A\otimes_R \phi}A @AAA @AA{\id}A @. \\
A\otimes_R P_2 @>>> A\otimes_R P_1 @>{\id_A\otimes_R d}>> A\otimes_R P_0 @>>> A\otimes_R Q @>>> 0. \\
\end{CD}\]

Now consider the following statement: For every $\epsilon >0$, there exist an element $a\in A$ with $v(a)<\epsilon$ and an $A$-module homomorphism $\id_A\otimes_R \gamma_{\epsilon}:A\otimes_R M \to A$, such that $(\id_A\otimes_R \gamma_{\epsilon}) \circ (\id_A\otimes_R \alpha)=a(\id_A\otimes_R \id_R)$. Clearly, this is true if and only if, \[a(\id_A\otimes_R \phi) \in \im(A \otimes_R \Hom_R(P_0,R) \to A \otimes_R \Hom_R(P_1,R)),\] by Lemma 5.6 of \cite{HH3}. This is equivalent to \[a(\id_A\otimes_R \phi) \in \im(A \otimes_R B \to A \otimes_R \Hom_R(P_1,R)),\] where $B$ is the module of coboundaries in $\Hom_R(P_1,R)$. Further from definition of dagger closure, this is equivalent to $\phi \in B^{\bold{v}}_{\Hom_R(P_1,R)}$. Finally, from Lemma 5.6 of \cite{HH3} again, the last statement is equivalent to the fact that the map $R\stackrel{\alpha}{\rightarrow} M$ is dag-phantom.
\end{proof}

Now we state our main result.

\begin{theorem}
Let $R$ be a complete Noetherian local domain and let $A$ be an almost Cohen-Macaulay local domain containing $R$, possessing a normalized valuation $v:A\rightarrow \RR\cup\{\infty\}$. Then there exists a balanced big Cohen-Macaulay algebra over $R$.
\end{theorem}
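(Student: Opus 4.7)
The plan is to verify that dagger closure satisfies the Algebra Axiom of \cite{R.G}; combined with Theorem 2.4 (dagger closure is a Dietz closure under the almost Cohen-Macaulay hypothesis) and Theorem 3.3 of \cite{R.G} (Dietz plus Algebra Axiom yields a balanced big Cohen-Macaulay algebra), this gives the desired conclusion immediately.

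To verify the Algebra Axiom, let $\alpha:R\to M$ be a dag-phantom extension with $M$ finitely generated. By Lemma 3.1, for every $\epsilon>0$ I obtain $a\in A$ with $v(a)<\epsilon$ and an $A$-module homomorphism $\gamma_\epsilon:A\otimes_R M\to A$ satisfying $\gamma_\epsilon\circ(\id_A\otimes_R\alpha)=a\cdot\id_A$. I then wish to show that $\alpha':R\to\symt(M)$, $1\mapsto\alpha(1)\cdot\alpha(1)$, is itself dag-phantom, which by Lemma 3.1 (applied this time to $\alpha'$ and $\symt(M)$) reduces to producing a splitting of the same form for every $\epsilon'>0$. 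Using the natural identification $A\otimes_R\symt_R(M)\cong\symt_A(A\otimes_R M)$, I would define $\gamma'_\epsilon:\symt_A(A\otimes_R M)\to A$ to be the $A$-linear map induced by the symmetric $A$-bilinear pairing $(x,y)\mapsto\gamma_\epsilon(x)\,\gamma_\epsilon(y)$, with the product taken in the commutative ring $A$. Evaluating on the generator $(1\otimes\alpha(1))\cdot(1\otimes\alpha(1))$ yields $\gamma_\epsilon(1\otimes\alpha(1))^2=a^2$, so $\gamma'_\epsilon\circ(\id_A\otimes_R\alpha')=a^2\cdot\id_A$. Since $v(a^2)=2v(a)<2\epsilon$, choosing $\epsilon<\epsilon'/2$ at the outset suffices to meet the required bound, so by Lemma 3.1 the map $\alpha'$ is dag-phantom.

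The step demanding the most care is the construction of $\gamma'_\epsilon$ and the scalar bookkeeping: commutativity of $A$ is what makes the pairing genuinely symmetric, so that it factors through $\symt$, and one must trace the composition carefully to conclude that the scalar becomes exactly $a^2$ rather than some uncontrolled quantity. Injectivity of $\alpha'$, which is part of being a phantom extension, falls out of the same identity, since $\alpha'(r)=0$ would force $ra^2=\gamma'_\epsilon\circ(\id_A\otimes_R\alpha')(r)=0$ in the domain $A$, hence $r=0$ as $a\neq 0$. Beyond this, the argument is purely formal, and combining the Algebra Axiom with Theorem 2.4 and Theorem 3.3 of \cite{R.G} produces the balanced big Cohen-Macaulay $R$-algebra.
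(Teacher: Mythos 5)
Your proposal is correct and follows essentially the same route as the paper: reduce to the Algebra Axiom via Lemma 3.1, define the map on $A\otimes_R\symt(M)\cong\symt_A(A\otimes_R M)$ by multiplying two copies of $\gamma_{\epsilon/2}$ (equivalently your symmetric bilinear pairing), and track the scalar $a^2$ with $v(a^2)<\epsilon$ before invoking Theorem 2.4 and Theorem 3.3 of \cite{R.G}. Your brief check of injectivity of $\alpha'$ is a small addition the paper leaves implicit, but the argument is otherwise the same.
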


\begin{proof}
Consider the dagger closure operation as defined in Definition 2.2 of the previous section and from Theorem 2.4, we know that it is a Dietz closure. Now, if we can show that this dagger closure also satisfies `Algebra axiom' as stated in \cite{R.G}, then using Theorem 3.3 of \cite{R.G}, we can conclude.

To see that dagger closure satisfies the `Algebra axiom' of \cite{R.G}, we assume that injective map $R\stackrel{\alpha}{\rightarrow} M$ is dag-phantom, where $M$ is a finitely generated $R$-module. From Lemma 3.1, this is equivalent to the fact that for every $\epsilon > 0$, there exist an element $a\in A$ with $v(a)<\epsilon$ and an $A$-module homomorphism $\id_A\otimes_R \gamma_{\epsilon}:A\otimes_R M \to A$, such that $(\id_A\otimes_R \gamma_{\epsilon}) \circ (\id_A\otimes_R \alpha)=a(\id_A\otimes_R \id_R)$. We need to show that $\alpha':R \to \symt(M)$ is also dag-phantom, where $\alpha'(1)= \alpha(1)\otimes \alpha(1)$. For $\epsilon>0$, take $a\in A$ with $v(a)< \epsilon/2$ such that $(\id_A\otimes_R \gamma_{\epsilon/2})\circ (\id_A\otimes_R \alpha)={a}(\id_A\otimes_R \id_R)$. Set $b=a^2$ and from properties of valuation, $v(b)<\epsilon$. Since symmetric algebra commutes with base change, we define $\id_A\otimes_R \gamma_{\epsilon}':A \otimes_R \symt(M) \to A$ by
\[(\id_A\otimes_R \gamma_{\epsilon}')(a \otimes (m \otimes m'))=a\gamma_{\epsilon/2}(m)\gamma_{\epsilon/2}(m').\]
 Then 
 \[(\id_A\otimes_R \gamma_{\epsilon}')((\id_A\otimes_R \alpha')(1))=(\id_A\otimes_R \gamma_{\epsilon}')(1 \otimes (\alpha(1) \otimes \alpha(1)))=(\gamma_{\epsilon/2}(\alpha(1)))^2={a^{2}}=b,\] since \[\gamma_{\epsilon/2}(\alpha(1))= 1\otimes \gamma_{\epsilon/2}(\alpha(1))= (\id_A\otimes_R \gamma_{\epsilon/2}\alpha)(1\otimes 1)= (\id_A\otimes_R \gamma_{\epsilon/2}) \circ (\id_A\otimes_R \alpha)(1\otimes 1) =\] \[a(\id_A\otimes_R \id_R)(1\otimes 1)= a.\] Again, using Lemma 3.1, we get that $\alpha'$ is also dag-phantom and thus we conclude.
\end{proof}
\begin{acknowledgement}
I would like thank the referee for careful reading of the paper and for all the valuable comments and suggestions for the improvement of the paper.
\end{acknowledgement}

\end{document}